\newtheorem{proposition}{Proposition}[section]   
\theoremstyle{remark}
\newtheorem*{remark}{Remark}        
\newcommand{\sat}{SAT}
\newcommand{\satCall}{\sat{} call}
\newcommand{\maxsat}{MaxSAT}
\newcommand{\unsat}{UNSAT}
\newcommand{\cnfopt}{CNFOPT}
\newcommand{\cnfoptProblem}{\cnfopt{} problem}
\newcommand{\baseMethod}{Integral Clause Cuts Algorithm}
\newcommand{\baseMethodShort}{ICCA}
\newcommand{\proofMethod}{Learned Clause Cuts Algorithm}
\newcommand{\proofMethodShort}{LCCA}
\newcommand{\rcTwo}{RC2 Stratified}
\newcommand{\satSolver}{\sat{} solver}
\newcommand{\maxsatSolver}{\maxsat{} solver}
\newcommand{\satProblem}{\sat{} problem}
\newcommand{\maxsatProblem}{\maxsat{} problem}
\newcommand{\milp}{MILP}
\newcommand{\milpLong}{mixed-integer linear programming}
\newcommand{\milpSolver}{\milp{} solver}
\newcommand{\pysat}{PySAT}
\newcommand{\clauseCut}{Clause Cut}
\newcommand{\gurobi}{Gurobi}
\newcommand{\lpRelaxation}{LP relaxation}
\newcommand{\clauseInequality}{clause inequality}
\newcommand{\clauseInequalities}{clause inequalities}
\newcommand{\noGoodCut}{no-good cut}
\title{Introducing \clauseCut{}s: Strong No-Good Cuts for MaxSAT Problems in Mixed Integer Linear Programming}
\author{
    \textbf{Max Engelhardt}\footnote{max.engelhardt@utn.de},
    \textbf{Milan Adhikari},
    \textbf{Jonasz Staszek},
    \textbf{Alexander Martin}\\[4pt]
    \small
    \textit{Department of Liberal Arts and Sciences, University of Technology Nuremberg}
}
\date{August 2025}
\begin{document}

\maketitle
\begin{abstract}
    \noindent In this paper we introduce \clauseCut{}s: linear inequalities obtained from clauses that are logically implied by a CNF formula, resembling strengthened \noGoodCut{}s.
    With these cuts, we tighten \milpLong{} (\milp) formulations of random weighted partial \maxsatProblem{}s, which have remained particularly challenging for core-guided complete \maxsatSolver{}s. 
    Our approaches treat variables that attain integral values at the \lpRelaxation{} as partial assignments which are supplied to a \satSolver{} as assumptions.
    When infeasible, these assignments are ruled out with \clauseCut{}s which are further strengthened with the \satSolver{}.
    Two separation algorithms are introduced, one that utilizes a \sat-oracle and finds \clauseCut{}s in the set of variables with integral values, and another that uses the clauses learned by a conflict driven clause learning (CDCL) \satSolver{} while evaluating the partial assignment.
    Experiments on SATLIB benchmarks demonstrate substantial performance gains of up to two orders of magnitude compared to the general purpose \milpSolver{} \gurobi{} 12, taking only 7.8\% of \gurobi{}'s runtime for the whole problem set.
    Results also surpass the specialized \maxsatSolver{} RC2, taking only 60\% of its runtime.
    In some cases, our optimization takes only slightly longer than a single \satCall{} on the \sat{}-formula.
    We explain the source of these gains and the limitations of standard MILP formulations in this context.    
\end{abstract}

\section{Introduction}
The Boolean Satisfiability (\sat) problem, a famous NP-complete problem \cite{cook_complexity_1971}, determines whether a given propositional formula is satisfiable.
Powerful \satSolver{}s exist, which can solve many \satProblem{}s impressively fast in practice. 
Most current state-of-the-art \satSolver{}s are conflict driven clause learning (CDCL) \satSolver{}s; an overview of the workings of CDCL \satSolver{}s is given in \cite{biere_handbook_2021}.

In applications, satisfying all the constraints is not always feasible, nonetheless it is 
desirable to satisfy the maximum number of constraints.
Such a problem is called Maximum Satisfiability (\maxsat), and constitutes an optimization extension of \sat{} that asks for a variable assignment that maximizes the number of satisfied clauses in a boolean formula. 
As shown by~\cite{krentel_complexity_1986}, the 
\maxsatProblem{} is NP-hard and even OptP-complete (optimize functions that can be evaluated in polynomial time).
By completeness, all OptP problems can be encoded in \maxsat. 
As a result, \maxsatSolver{}s are powerful tools with diverse applications in planning, scheduling, data analysis, correlation clustering, upgrading software packages, electronic design automation, design debugging, auctions, bioinformatics, course timetabling, routing, as well as error localization \cite{bacchus_maximum_2021}.

\maxsat{} may be solved using exact (complete) or inexact (incomplete, heuristic) approaches. 
Numerous inexact approaches to solve \maxsat{} have been developed over the years \cite{ansotegui_incomplete_2022, berg_core-boosted_2019, demirovic_techniques_2019, kemppainen_incomplete_2020}.
However, the focus of this paper will be on exact algorithms that guarantee to find a globally optimal solution.
Such exact algorithms have exploited branch-and-bound approaches \cite{heras_minimaxsat_2008, li_combining_2021,li_boosting_2022},
core-guided approaches \cite{alviano_maxsat_2015, ansotegui_sat-based_2013, ansotegui_wpm3_2017, ansotegui_exploiting_2016, fu_solving_2006, heras_core-guided_2011, ignatiev_rc2_2019, marques-silva_algorithms_2008, morgado_core-guided_2014},
implicit hitting set (IHS) approaches \cite{ lee_solving_2011, hutchison_postponing_2013, saikko_lmhs_2016},
or objective bounding approaches \cite{een_translating_2006, ignatiev_progression_2014, koshimura_qmaxsat_2012}.
A conceptually similar approach to \clauseCut{}s has been taken by Bockmayr and Pisaruk in \cite{bockmayr_detecting_2006}, who use \noGoodCut{}s for monotone constraints. 

In this paper, we propose to use \satSolver{}s to find unsatisfiable cores that are used for the derivation of linear inequalities, which strengthen a \milpLong{} (\milp{}) formulation of the \maxsatProblem{}.
We call these linear inequalities \clauseCut{}s, as they correspond to clauses implied by the \sat{}-formula.
We provide two separation algorithms (Section \ref{sec:algorithms}) to find \clauseCut{}s that cut off a given fractional point.
To the best of our knowledge, the methods proposed in this paper are the first ones to separate general fractional points with strong \noGoodCut{}s in the context of \milp{}s constrained by CNF-formulae.

We present computational studies comparing our approaches against the general purpose \milpSolver{} \gurobi{} 12 \cite{gurobi_optimization_llc_gurobi_2024} and the \maxsatSolver{} \rcTwo{} \cite{ignatiev_rc2_2019}. 
As previous empirical studies indicate that randomly generated \maxsatProblem{}s are particularly challenging for core-guided \maxsatSolver{}s \cite{liffiton2009generalizing}, we use random instances from the SATLIB benchmark library \cite{hoos_satlib_nodate}, with randomly generated integral linear objective functions, as benchmarks.

Prior to its removal from the \maxsat{} Evaluations in 2017, incomplete stochastic local search solvers consistently achieved stronger performance in the ``random'' category \cite{cai2016new, liu2017should}. 
The \maxsat{} Evaluations' organizing committee itself acknowledged that the decline in participation of incomplete solvers in 2017 was largely due to the removal of the category of ``random'' benchmarks \cite{maxsat_evaluations_2017_slides}, suggesting that these instances were precisely where incomplete approaches demonstrated their competitive advantage.
An evaluation using random problems thus provides valuable insight into the impact of \milp{} strengthening in a \maxsat{} context.
On these random benchmarks, our algorithms outperform \gurobi{} by up to two orders of magnitude, and one of them is significantly faster than \rcTwo{}.
We further contribute explanations on why our methods perform so well and what makes the randomly generated instances easy to solve.

\textbf{Contents:} This paper is composed of six sections.
Following this introduction, Section \ref{sec:Basics} focuses on the theoretical basics and foundations of \maxsat{} and its embedding into \milp{} and proves results necessary for the algorithms. 
Section \ref{sec:algorithms} presents our \sat{}‐based algorithms for separating and strengthening \clauseCut{}s. 
In Section \ref{sec:study}, we detail the computational study including experimental setup and performance comparisons. 
Finally, Section \ref{sec:conclusion} concludes the paper. 

\section{Theoretical Basics}
\label{sec:Basics}
In this section, we introduce the basic terminology and definitions necessary for readers unfamiliar with \sat{} and \milp{}. 
We formally define literals, clauses, and CNF formulas, along with the standard notions of assignments and satisfiability, closely to \cite{ansotegui_solving_2013} and \cite{li_combining_2021}.
We then present the \maxsatProblem{} and its weighted and partial variants, which serve as the paper's optimization targets.

Building on this foundation, we embed \maxsatProblem{}s into the \milp{} framework by translating logical clauses into linear inequalities.
As the main problem of interest in this paper, we introduce the CNFOPT formulation of \maxsat{} as an ILP whose feasible set is formed by the solutions of a \sat-formula and show that it is equivalent to \maxsat{}. 
We then show how clauses can be embedded as linear inequalities, and how \satSolver{}s, particularly those based on \textit{conflict-driven clause learning} (CDCL), can be employed to verify implied clauses via assumptions-based \sat-solving.
This enables the generation of \textit{\clauseCut{}s} to strengthen the \lpRelaxation{}, and is needed to consequently build the two separation algorithms in Section \ref{sec:algorithms} and understand their performance.
We conclude in Section \ref{sec:supportingHyperplanes}, with a proof of the quality of the cuts generated by our procedures, showing that minimal unsatisfiable cores correspond to supporting hyperplanes.

\subsection{Logical foundations of \sat{} and \maxsat{}}
\label{sec:Basics:Logical}
\paragraph{}
\noindent We define a \textit{literal} $l$ as a binary variable $x$ or its negation $\neg x=1-x$.
A \textit{clause} $C$ is a disjunction of $k$ literals $l_1 \lor ... \lor l_k$.
If we want to emphasize that a disjunction or a single literal is to be seen as a clause, we put it in parentheses, like $C=(\neg x)$.
A propositional formula $\varphi$ is in \textit{Conjunctive Normal Form} (CNF), if it is a conjunction of $m$ clauses $C_1 \land ... \land C_m$.
We denote the Boolean set with $\mathbb{B}:=\{0,1\}$.
The set of binary variables appearing in $\varphi$ is denoted by $\text{var}(\varphi)$.
An \textit{assignment} is a function $a: \text{var}(\varphi) \to \mathbb{B}$, a \textit{partial assignment} is a function $b: S \to \mathbb{B}$, where $S \subset \text{var}(\varphi)$.
As an abbreviation, a (partial) assignment can also be noted by the conjunction of the literals it fixes.
For example, $a: \; x \mapsto 1, \; y \mapsto 0$ can be written as $x \land \neg y$.
The literal $x$ is \textit{satisfied} by an assignment $a$ if and only if $a(x)=1$; if $a(x)=0$, then the literal $\neg x$ is satisfied.
A clause is satisfied exactly if at least one of its literals is satisfied.
A CNF is satisfied exactly if all its clauses are satisfied.
We say that a clause $C$ is \textit{implied} by the CNF $\varphi$ (``$\varphi \implies C$'') exactly if all $a$ that satisfy $\varphi$ will also satisfy $C$.
The \textit{\satProblem{}} is the decision problem if, for a given CNF $\varphi$, there exists an assignment that satisfies $\varphi$.
The \textit{\maxsatProblem{}} is the optimization version of \sat, where a solution is sought that has a minimal number of unsatisfied clauses. 
In a \textit{weighted} \maxsatProblem{}, a positive integer weight is assigned to the clauses in the CNF. 
An assignment's objective value is then given by the sum of the weights of the unsatisified clauses.
The weighted \maxsatProblem{} then asks for an assignment with minimal objective. 
In the \textit{partial} \maxsatProblem{}, the CNF's clauses are divided into two sets, hard and soft clauses.
The \textit{hard clauses} must be satisfied by all solutions.
The \textit{soft clauses} may be violated and hence determine the objective value of the solution in the same fashion. 
In the following, we will embed these logical formulations into ILP.
For a general overview and introduction to (mixed) integer linear programming, we refer, for instance, to Nemhauser et al.~\cite{nemhauser_integer_1988}.

\subsection{CNFOPT as an ILP and \clauseCut{}s}
\label{sec:Basics:CNFasILP}
With \textit{(Mixed) Integer Linear Programs} we mean problems of the form 
\begin{equation}
    \underset{x}{\min}\;\; c^Tx \;\; \text{s.t.} \;\; Ax\geq b , \;\; \forall i \in I: x_i \in \mathbb{Z} ,
\end{equation}
where $c$ and $b$ are appropriately sized rational vectors and $A$ is an appropriately sized rational matrix, while $I$ is an index set of the components of $x$ which must be integral.
If all variables $x_i$ need to be integral, the problem is called an \textit{integer linear program}, or ILP for short.
We denote the polyhedron that is the convex hull of the \milp{}'s feasible points with $P_I:=\text{conv}(\{x \; \mid \; Ax\geq b , \;\; \forall i \in I: x_i \in \mathbb{Z}\}) $.
Furthermore, for any given MILP, we define its \textit{\lpRelaxation{}} -- or simply relaxation -- as the linear program obtained by removing the integrality constraints from the MILP:
\begin{equation}
    \underset{x}{\min}\;\; c^Tx \;\; \text{s.t.} \;\; Ax\geq b .
\end{equation}
The polytope of the convex hull of the relaxation's feasible points is called $P_R:=\{x \mid Ax\geq b \}$.
A \textit{cut} is a half-space $\alpha^T x\geq \beta$ such that $\forall p \in P_I: \alpha^Tp\geq\beta$.
The cut \textit{yields} the hyperplane $\alpha^T x = \beta$.
A point $\Bar{x}$ is cut off by a cut if $\alpha^T \Bar{x} < \beta$.
For a clause $C=l_1 \lor ... \lor l_k$ we define the \textit{\clauseInequality{}} $\sum_{i=1}^k l_i \geq 1$.
As an example, the clause $x \lor \neg y$ gives the inequality $x+1-y \geq 1$.
If a \clauseInequality{} is a cut, we call it a \textit{\clauseCut{}}.
Finding a cut of a specific kind in order to add it to the problem description is called \textit{separating} a cut.
For example, finding a \clauseCut{} is called separating a \clauseCut{}.

Viewed as linear inequalities, \clauseCut{}s coincide with \textit{\noGoodCut{}s} as defined in \cite{bockmayr_detecting_2006}.
However, we introduce the term \clauseCut{} to highlight that these inequalities arise from clauses implied by the CNF encoding of the feasible set.
In this paper, we focus on identifying strong \clauseCut{}s that can be added to the MILP in order to improve the tightness of its \lpRelaxation{}, thus hoping to speed up the optimization process akin to a branch-and-cut procedure.
In our computational study however, we only cut at the root node of the branch-and-bound tree.

In this work we deal with weighted partial \maxsatProblem{}s in the following form, which we call \cnfoptProblem{}s:
\begin{equation}
    \underset{x}{\min}\;\; c^Tx \;\; \text{s.t.} \;\; x ~\text{satisfies}~\varphi, \;\; \forall x_i \in \text{var}(\varphi): x_i \in \mathbb{B} ,
\end{equation}
with an integral vector $c$ and the CNF $\varphi$.
This is basically an ILP that is constrained such that the feasible solutions are exactly the assignments that satisfy the CNF $\varphi$.
For any such ILP, the following proposition holds:
\begin{proposition}[Implied clauses yield \clauseCut{}s]
  For any clause $C: \varphi \implies C$, $C$'s \clauseInequality{} is a \clauseCut{} for the ILP constrained by $\varphi$.
\end{proposition}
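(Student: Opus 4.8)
The plan is to verify the defining condition of a cut directly: to show that $C$'s \clauseInequality{} is satisfied by every point of $P_I$, exploiting that $P_I$ is the convex hull of the satisfying assignments of $\varphi$ together with the elementary fact that a linear inequality valid on a set of points is automatically valid on their convex hull.

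First I would make explicit the arithmetic meaning of the clause at a Boolean point. Writing $C = l_1 \lor \dots \lor l_k$ and substituting $\neg x = 1 - x$, the \clauseInequality{} $\sum_{i=1}^k l_i \geq 1$ becomes a genuine linear inequality $\alpha^T x \geq \beta$ in the variables $\text{var}(\varphi)$. At any Boolean assignment $a$, each term $l_i$ evaluates to $1$ exactly when the literal $l_i$ is satisfied by $a$ and to $0$ otherwise, so $\sum_{i=1}^k l_i(a)$ counts the literals of $C$ that $a$ satisfies. Consequently, at a Boolean point the \clauseInequality{} holds if and only if $a$ satisfies at least one literal of $C$, that is, if and only if $a$ satisfies the clause $C$. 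This translation between the logical and the linear-arithmetic reading of the clause is the conceptual heart of the argument.

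Next I would pass from logical implication to validity at the feasible integer points, and then extend to the whole polyhedron. By the definition of the \cnfoptProblem{}, the feasible integer points of the ILP are precisely the assignments $a$ that satisfy $\varphi$. Since $\varphi \implies C$, every such $a$ also satisfies $C$, hence by the previous step satisfies the \clauseInequality{}; thus $\alpha^T a \geq \beta$ for every feasible integer point $a$. Because this is a single linear inequality, it then holds at every convex combination of such points: if $p = \sum_j \lambda_j a_j$ with $\lambda_j \geq 0$ and $\sum_j \lambda_j = 1$, then $\alpha^T p = \sum_j \lambda_j \alpha^T a_j \geq \sum_j \lambda_j \beta = \beta$. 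Since $P_I$ is by definition the convex hull of the feasible integer points, $\alpha^T x \geq \beta$ is valid on all of $P_I$, which is exactly the condition for it to be a cut; being a \clauseInequality{}, it is therefore a \clauseCut{}.

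I expect no deep obstacle here, as the statement is essentially a careful bookkeeping translation between the Boolean satisfaction of $C$ and the linear reading of its \clauseInequality{}. The only point demanding attention is the passage from the integer points to their convex hull $P_I$, where one must invoke that a linear inequality valid on a point set remains valid on its convex hull; everything else is definitional.
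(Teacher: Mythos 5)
Your proof is correct and follows essentially the same route as the paper's: show the \clauseInequality{} holds at every satisfying assignment of $\varphi$ (using $\varphi \implies C$), then pass to all of $P_I$ via convexity. You spell out two steps the paper leaves implicit -- the equivalence between Boolean satisfaction of $C$ and the arithmetic inequality at $0/1$ points, and the explicit convex-combination computation -- but the underlying argument is identical.
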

\begin{proof}
    Suppose that for the clause $C$ it holds that $\varphi \implies C$ and that $x$ is a feasible solution to the ILP.
    As $x$ is a feasible solution to the ILP, it must satisfy $\varphi$.
    As $\varphi \implies C$, $x$ must satisfy $C$.
    Hence, $x$ must satisfy $C$'s \clauseInequality{}.
    As the argument holds for any feasible $x$, it holds for all of them.
    Because of this and as $P_I$ is convex (by its definition), the \clauseInequality{} of the implied clause $C$ is indeed a \clauseCut{}.
\end{proof}
\noindent A general weighted partial \maxsatProblem{} can be brought into \cnfopt{} form in the following way.
Let $\varphi$ be composed out of the hard clauses of the \maxsatProblem{} and then, for each soft clause $C$, an additional hard clause $C':=(C \lor x_C)$ with an auxiliary variable $x_C$ is added to $\varphi$.
The objective coefficient of $x_C$ is set to the weight of $C$ ~\cite{ansotegui_solving_2013}.
Conversely, to translate a \cnfoptProblem{} into a (weighted partial) \maxsatProblem{}, the clauses in the CNF $\varphi$ become the hard clauses.
For the terms in the objective, it depends on the sign of the objective coefficient: 
if $c_i\geq 0$, the soft clause $(\neg x_i)$ with weight $c_i$ is added.
If, on the other hand, $c_i < 0$, then we can rewrite $c_ix_i$ as follows:
\begin{equation}
    c_ix_i=-c_i(1-x_i)+c_i=-c_i \neg x_i + c_i .
\end{equation} 
We then add the soft clause $(x_i)$ with the positive weight $-c_i$ instead. 
The constant term $+c_i$ resulting from the rewriting can be ignored in the objective function, as it does not affect where the optimum is attained. \\

\noindent We also note that \clauseInequalities{} are generally not enough to describe the integral polytope $P_I$ of a \cnfoptProblem{}.
As an example we mention the stable-set-problem of a fully connected conflict graph of $n \geq 2$ nodes, where, as we shall see, none of the facets are \clauseInequalities{}.

\begin{proposition}[not all facets are \clauseCut{}s]
    \label{proposition:notFacets}
    Consider the maximum-stable-set problem of a fully connected conflict graph with $n\geq 2$ nodes. 
    Then there is no \clauseCut{} that can cut off the solution to its \lpRelaxation{}.
\end{proposition}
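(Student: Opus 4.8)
The plan is to reduce the separation question to a single numerical condition on clause length, evaluated at the symmetric fractional optimum. First I would fix the encoding: for the complete conflict graph $K_n$ I introduce one binary variable $x_i$ per node ($x_i=1$ meaning node $i$ is chosen) and, since every pair of nodes is an edge, one hard clause $(\neg x_i \lor \neg x_j)$ for each pair $i<j$, so that $\varphi=\bigwedge_{i<j}(\neg x_i \lor \neg x_j)$ and the objective maximizes $\sum_i x_i$. The associated \clauseInequalities{} are exactly $x_i+x_j\le 1$. I then identify the relaxation's optimum as $\bar x=(\tfrac12,\dots,\tfrac12)$: summing the $\binom{n}{2}$ edge inequalities, in which each variable appears $n-1$ times, gives $(n-1)\sum_i x_i\le \binom{n}{2}$, hence $\sum_i x_i\le n/2$, a value attained by $\bar x$ (which is feasible, as $\tfrac12+\tfrac12=1$). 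For $n\ge 3$ this optimum is unique; for $n=2$ the statement is understood for this fractional optimum.

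The key step is the evaluation of an arbitrary \clauseInequality{} at $\bar x$. Take any clause $C=\bigvee_{i\in P}x_i \lor \bigvee_{j\in N}\neg x_j$ over $\text{var}(\varphi)$, which may be assumed to satisfy $P\cap N=\emptyset$, together with its inequality $\sum_{i\in P}x_i+\sum_{j\in N}(1-x_j)\ge 1$. Because every literal --- whether $x_i$ or $\neg x_i=1-x_i$ --- takes value $\tfrac12$ at $\bar x$, the left-hand side equals $k/2$, where $k=|P|+|N|$ is the number of literals in $C$. Consequently $\bar x$ is cut off precisely when $k/2<1$, i.e.\ only a clause of length $k\le 1$ could possibly separate it.

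It then remains to rule out short implied clauses, that is, to show that no clause of length $k\le 1$ is a \clauseCut{} (equivalently, is implied by $\varphi$). The empty clause ($k=0$) would force $\varphi$ to be unsatisfiable, contradicting that the all-zero assignment satisfies $\varphi$. For a unit clause ($k=1$), the clause $(x_i)$ is falsified by the all-zero satisfying assignment, while $(\neg x_i)$ is falsified by the satisfying assignment that sets only $x_i=1$ (the singleton stable set); hence neither polarity is implied. Therefore every \clauseCut{} has $k\ge 2$, evaluates to at least $1$ at $\bar x$, and cannot cut it off, which is the claim.

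I expect the main obstacle to be conceptual rather than computational: recognizing that at the symmetric point each literal contributes exactly $\tfrac12$, which collapses the entire family of \clauseInequalities{} into a single dependence on clause length and thereby isolates unit clauses as the only candidate separators. Once that observation is made, the impossibility of separation follows immediately from the two ``trivial'' stable sets --- the empty set and the singletons --- which are precisely the satisfying assignments refuting the two unit-clause polarities.
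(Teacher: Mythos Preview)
Your proof is correct and follows essentially the same route as the paper: encode the complete conflict graph with the clauses $(\neg x_i\lor\neg x_j)$, identify the LP optimum $\bar x=(\tfrac12,\dots,\tfrac12)$, observe that every literal evaluates to $\tfrac12$ there so any clause of length $k\ge 2$ yields left-hand side $k/2\ge 1$, and rule out implied unit (and empty) clauses. Your version is in fact slightly more careful---you justify optimality of $\bar x$ via the summed edge inequalities, treat the $k=0$ case, and exhibit explicit satisfying assignments refuting each unit-clause polarity---whereas the paper states these points more tersely.
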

\begin{proof}
    For this problem, the feasible solutions are the corners of the unit simplex and thus $P_I=\{ x \in \mathbb{R}^n \mid x\geq0, x_1+...+x_n \leq 1 \}$.
    Written as \cnfopt{}, the problem takes the form 
    \begin{equation}
        \underset{x \in \mathbb{B}^n}{\min}\;\; x_1 + ... + x_n \;\; \text{s.t.} \;\; \forall i \neq j: ~ (\neg x_i \lor \neg x_j) ,
    \end{equation}
    or, written with linear inequalities,
    \begin{equation}
        \underset{x \in \mathbb{B}^n}{\min}\;\; x_1 + ... + x_n \;\; \text{s.t.} \;\; \forall i \neq j: ~ x_i + x_j \leq 1 .
    \end{equation}
    The solution to the problem's \lpRelaxation{} is $\Bar{x}_1=...=\Bar{x}_n=\frac{1}{2}$.
    First, there is no clause of length 1 that is implied by $\varphi$, as such a clause would fix a variable and thus be invalid.
    Suppose now that there was any implied clause $C=l_1 \lor ... \lor l_k $ of length $k\geq2$.
    As $\forall i: \Bar{x}_i=\frac{1}{2}$, it follows that for each literal $\Bar{l}$ it holds that $\Bar{l}=\frac{1}{2}$, and thus $\Bar{l}_1+...+\Bar{l}_k=\frac{1}{2}+...+\frac{1}{2} = 1 +...\geq 1$.
    Therefore, $\Bar{x}$ is not cut off by $C$'s \clauseInequality{}.
\end{proof}

\begin{remark}
    The missing facet $x_1+...+x_n \leq 1$ (a clique inequality) cannot be verified with a \satSolver{} directly, as it is not a \clauseInequality{}. 
\end{remark}

\subsection{SAT-solvers}
\satSolver{}s \cite{biere_handbook_2021} take a given CNF $\varphi$ and can then check whether the CNF is satisfiable (\textit{\sat}) or unsatisfiable (\textit{\unsat}).
After a solver was initialized with a CNF $\varphi$, in our notation, a call to the solver checking the satisfiability of $\varphi$ is made with $solve()$.
We want to emphasize that \satSolver{}s are not abstract functions but programs with special functionality that we use, and calls to the solver might change the solver's internal state.

Often, it is of interest whether a (partial) assignment can be extended to a satisfying assignment.
Such a (partial) assignment of literals, which is given to the solver as an optional argument to the method $solve()$ and which is assumed to be true only in this specific call, is commonly called \textit{assumptions}.
With the assumptions given, the solver then tries to find an extension of the partial assignment to a complete assignment that satisfies $\varphi$. 
If such an extension exists, it returns \sat, else it returns \unsat.
Assumptions can also be understood as temporarily adding their literals as unit clauses to the CNF only for this call to the solver. 
As an example in our notation, given a solver initialized with the CNF $\varphi=(x \lor y)\land (\neg y)$, $solve()$ would return \sat{} while $solve(\neg x)$ would return \unsat.
We call assumptions $a$ satisfiable exactly if they can be extended to an assignment that satisfies $\varphi$.
Unsatisfiable assumptions are also called an \textit{unsatisfiable core}. 
An unsatisfiable core is a \textit{minimal unsatisfiable core} if removing any one of its literals makes it satisfiable.
In the example, $\neg x$ is a minimal unsatisfiable core.
As a core is a conjunction of literals, the core can be ruled out by a clause containing the negated literals of the core, for example the unsatisfiable core $\neg x$ is ruled out with the clause $(x)$. 

We use \satSolver{}s in this work because of their ability to efficiently check satisfiability under assumptions, and the solver's consequent ability to check if a given clause $C$ is implied by the CNF $\varphi$, as this simultaneously proves that the \clauseInequality{} of $C$ is a \clauseCut{}.
To check whether $\varphi \implies C$ using assumptions, first note that $C$ is violated by an assignment exactly if all of $C$'s literals are violated.
Hence, an assignment that satisfies $\varphi$ but violates $C$ must satisfy $\varphi \land \bigwedge_{l \in C} \neg l$.
If $\varphi \land \bigwedge_{l \in C} \neg l$ is \unsat{}, no such assignment exists and thus all assignments satisfying $\varphi$ must also satisfy $C$, which means that $\varphi \implies C $.
Thus, if the \satSolver{} is queried with $solve(\land_{l \in C}\neg l)$ and returns \unsat{}, it holds that $\varphi \implies C$.
In the above example of $\varphi=(x \lor y)\land (\neg y)$, $solve(\neg x) =\unsat$ proves that $\varphi \implies (x)$.
As the clause $(x)$ corresponds to the \clauseInequality{} $x\geq 1$, this also proves that $x \geq 1$ is a cut for a corresponding \cnfoptProblem{}.

\textit{CDCL-based solvers} attempt a depth-first search for a satisfying assignment. 
If a conflict in the current (partial) assignment is detected, it is analyzed and a new (logically redundant) clause, which rules out this conflict, is learned from it.
An overview of the workings of CDCL \satSolver{}s can be found in \cite{biere_handbook_2021}.
CDCL-based solvers allow for warm-starts, where clauses learned from previous \satCall{}s can be reused in subsequent calls to the solver.
The clauses are learned such that they are implied by $\varphi$ even if they are derived in a call using assumptions.
In this paper, we use the ability of CDCL-based \satSolver{}s to learn clauses implied by the CNF, and we retrieve the set of all such clauses learned by the solver until now with $getLearnedClauses()$.

\subsection{Minimal unsatisfiable cores make for supporting hyperplanes}
\label{sec:supportingHyperplanes}
As shown in Proposition~\ref{proposition:notFacets}, \clauseCut{}s are not facet-defining in general.
Nevertheless, we prove in this section that any \clauseCut{} derived from a minimal unsatisfiable core defines a supporting hyperplane of the integer feasible polytope. 
Hence \clauseCut{}s can be strengthened to yield supporting hyperplanes;  
this guarantees a baseline quality for the cuts produced by our algorithms in Section \ref{sec:algorithms}, in the sense that the right-hand-side of the constraint cannot be strengthened without making the cut invalid.

\begin{proposition}[Necessary literals in unsatisfiable cores make for supporting hyperplanes]
\label{proposition:supportingHyperplanes}
    Let $C=l_1 \land ... \land l_k$ be an unsatisfiable core with the literal $l_1$ such that $D=l_2 \land ... \land l_k$ is satisfiable.
    Then the \clauseCut{} of the clause $ \neg l_1 \lor ... \lor \neg l_k$ yields a supporting hyperplane of the polyhedron.
\end{proposition}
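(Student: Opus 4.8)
The plan is to check the two defining requirements of a supporting hyperplane of $P_I$ separately: \emph{validity}, meaning $P_I$ lies entirely in the half-space $\sum_{i=1}^k \neg l_i \geq 1$ cut out by the \clauseInequality{}, and \emph{tightness}, meaning at least one integer feasible point attains equality on the hyperplane $\sum_{i=1}^k \neg l_i = 1$.

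For validity, I would argue that the clause $\neg l_1 \lor \ldots \lor \neg l_k$ is implied by $\varphi$. By the same reasoning used for assumption-based checking in Section~\ref{sec:Basics}, this clause is violated by an assignment exactly when every one of its literals $\neg l_i$ is false, that is, exactly when $l_1 \land \ldots \land l_k = C$ holds. Since $C$ is an unsatisfiable core, no model of $\varphi$ can make $C$ true, so no model of $\varphi$ violates the clause; hence $\varphi \implies (\neg l_1 \lor \ldots \lor \neg l_k)$. The earlier proposition that implied clauses yield \clauseCut{}s then gives that the \clauseInequality{} is valid for $P_I$.

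For tightness, I would use the hypothesis that $D = l_2 \land \ldots \land l_k$ is satisfiable to fix a model $a$ of $\varphi$ under which $l_2, \ldots, l_k$ are all satisfied. The key step is to show that $a$ must falsify $l_1$: were $l_1$ also satisfied under $a$, then $a$ would be a model of $\varphi$ satisfying the full core $C$, contradicting the unsatisfiability of $C$. Now I evaluate the linear expression $\sum_{i=1}^k \neg l_i$ at the integer point $a$. Recalling that $\neg l_i$ takes the value $1$ precisely when $l_i$ is falsified and $0$ when $l_i$ is satisfied (regardless of whether $l_i$ is a variable or a negated variable), the terms for $i \geq 2$ all vanish while the $i=1$ term equals $1$, so the sum is exactly $1$. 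Thus the feasible point $a \in P_I$ lies on the hyperplane $\sum_{i=1}^k \neg l_i = 1$.

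Putting the pieces together, the hyperplane is valid for $P_I$ and is met by the feasible point $a$, which is the definition of a supporting hyperplane. I expect the tightness argument to be the crux: the decisive observation is that the single necessary literal $l_1$ is exactly the one falsified at the witness point, which is what forces the left-hand side to equal $1$ rather than exceed it, and this is precisely where the hypothesis that $D$ is satisfiable is consumed. The only remaining care is the routine bookkeeping identifying the linear value of each $\neg l_i$ with the statement ``$l_i$ is falsified.''
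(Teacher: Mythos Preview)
Your proof is correct and follows essentially the same approach as the paper's: exhibit an assignment satisfying $\varphi$ and $D$, observe it must falsify $l_1$, and check that it meets the \clauseInequality{} with equality. If anything you are slightly more careful---the paper asserts $a(l_1)=0$ directly from the hypothesis without spelling out the contradiction step, and leaves the validity half implicit in the word ``\clauseCut{}.''
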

\begin{proof}  
    By assumption it holds that     
    \begin{equation}
        \exists \text{ assignment } a : a(l_1)=0 \; \wedge (\forall j\neq 1: \, a(l_j)=1) \; \wedge \; (a \text{ satisfies } \varphi ) .
    \end{equation}
    That means for the literal $l_1$ in the unsatisfiable core $C$, there is an assignment $a$ that does not satisfy $l_1$ but every other literal in $C$ and $a$ also satisfies $\varphi$.
    This formalizes the demand that removing the literal $l_1$ from $C$ makes the core satisfiable.
    
    Consider now the assignment $a$.
    $a$ corresponds to the binary vector 
    \begin{equation}
        p \in \mathbb{B}^n: p_k=
        \begin{cases}
            0 & a(x_k)=0 \\
            1 & a(x_k)=1
        \end{cases} .
    \end{equation}
    $p$ is integrally feasible for the problem, as $a$ satisfies $\varphi$ by assumption.
    At the same time, $C$ yields the \clauseCut{}
    \begin{equation}
        \neg l_1 + ... + \neg l_k \geq 1 .
    \end{equation}
    Evaluating the vector $p$ in this inequality yields
    \begin{equation}
        a(\neg l_1)+\sum_{j\neq 1} a(\neg l_j)
        =1-a(l_1)+\sum_{j\neq 1} 1-a(l_j)
        =1+\sum_{j\neq 1} 0 = 1 .
    \end{equation}
    Hence $p$ is in the integer feasible polytope (as $p$ satisfies $\varphi$) and fulfills the \clauseCut{} with equality. 
    Thus the \clauseCut{} yields a supporting hyperplane of the integer feasible polytope.    
\end{proof}
\noindent With this we can conclude for minimal unsatisfiable cores: 
\begin{proposition}[Minimal unsatisfiable cores make for supporting hyperplanes]
    Let $C=l_1 \land ... \land l_k$ be a minimal unsatisfiable core with literals $l_i$.
    Then the \clauseCut{} of the clause $(\neg l_1 \lor ... \lor \neg l_k)$ yields a supporting hyperplane of the polyhedron.
\end{proposition}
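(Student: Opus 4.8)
The plan is to derive this statement as an immediate corollary of Proposition~\ref{proposition:supportingHyperplanes}, which I already have at my disposal. That proposition produces a supporting hyperplane from the \clauseCut{} of $(\neg l_1 \lor \dots \lor \neg l_k)$ under the single hypothesis that deleting the distinguished literal $l_1$ from the core leaves a satisfiable conjunction $D = l_2 \land \dots \land l_k$. So the entire task reduces to checking that a minimal unsatisfiable core supplies such a literal.

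First I would unpack the definition of minimality given earlier in the excerpt: $C$ is a minimal unsatisfiable core exactly when removing any one of its literals makes the remaining conjunction satisfiable. In particular, removing the literal $l_1$ yields $D = l_2 \land \dots \land l_k$, and by minimality $D$ is satisfiable. This is precisely the hypothesis required by Proposition~\ref{proposition:supportingHyperplanes}, and the clause whose cut we consider, $(\neg l_1 \lor \dots \lor \neg l_k)$, is identical in both statements. With the hypothesis verified, I would simply invoke Proposition~\ref{proposition:supportingHyperplanes} to conclude that the \clauseCut{} of $(\neg l_1 \lor \dots \lor \neg l_k)$ yields a supporting hyperplane of the integer feasible polytope, which is exactly the claim.

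The proof is essentially a definition-chase, so there is no real computational obstacle. The only point that deserves a moment's care is the bookkeeping around the distinguished literal: Proposition~\ref{proposition:supportingHyperplanes} singles out $l_1$, whereas minimality is a symmetric condition holding for every literal. I would note that any literal of the core can play the role of $l_1$ without changing the clause cut itself, so fixing attention on $l_1$ is harmless; this is the one place where I would make sure the indexing in the cited proposition lines up with the (arbitrary) enumeration of the core's literals used here.
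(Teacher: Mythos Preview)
Your proposal is correct and follows essentially the same approach as the paper: observe that minimality means any literal can be removed to obtain a satisfiable core, pick one (say $l_1$), and apply Proposition~\ref{proposition:supportingHyperplanes}. The paper's proof is just a terser version of what you wrote.
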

\begin{proof}  
    As all literals in a minimal unsatisfiable core are necessary to keep the core unsatisfiable, we can pick any and apply Proposition \ref{proposition:supportingHyperplanes}.
\end{proof}

\section{Using \sat~Solvers to separate \clauseCut{}s}
\label{sec:algorithms}
In this chapter we introduce two simple algorithms that try to separate \clauseCut{}s.
For this, the algorithms inspect a given point $\Bar{x}$ and then try to find a valid \clauseCut{} that is violated by $\Bar{x}$.
In an \milp{}-context, $\Bar{x}$ would be the (fractional) solution to the current relaxation, and adding the \clauseCut{} would strengthen this relaxation with the goal of approximating the integral hull $P_I$ as well as possible.

As the algorithms need to solve a general \satProblem{}, they might take substantial time to complete.
However, the use of modern \satSolver{}s enables efficient derivation and validation of \clauseCut{}s in practice. 
Furthermore, the solvers can also be used to strengthen the \clauseCut{} by shortening the clause to which the cut corresponds. 
Neither of the algorithms presented is complete, meaning there are instances where a separating \clauseCut{} for $\Bar{x}$ exists but is not found.
Algorithm \ref{alg:proof} can, by design, separate a superset of \clauseCut{}s compared to Algorithm \ref{alg:basic}. 
If the entries of $\Bar{x}$ with integral values form unsatisfiable assumptions, both algorithms are guaranteed to find a \clauseCut{}.

\subsection{A motivating example for \clauseCut{}s}
\label{sec:algorithms:example}
In general, \cnfopt{} formulations can allow for overly optimistic \lpRelaxation{}s which circumvent the logic actually encoded in the CNF.
For example, setting all variables to the fractional value of $0.5$ will satisfy all inequalities associated to clauses of length 2 or greater.
This can be especially disadvantageous if the target variables (objective coefficients $\neq 0$) of the optimization are determined by a system of logical implications from logically interconnected non-target (objective coefficient $=0$) variables.
We will now examine an example that illustrates the difficulties of \cnfopt{} as well as the utility of our proposed cuts. \\
Consider the problem
\begin{mini}
    {x,y,z \in \mathbb{B}}{z}{}{}
    \addConstraint{x }{\iff y}
    \addConstraint{z}{\iff (x = y)}.
\end{mini}
We note that as $x$ is equivalent to $y$ in this problem, it must always hold that $z=1$.
We could also present the problem as the following CNF:
\begin{mini}
    {x,y,z \in \mathbb{B}}{z}{}{}
    \addConstraint{\neg x }{\lor \;\; y}{}
    \addConstraint{x }{\lor \neg y}{}
    \addConstraint{\neg x }{\lor \neg y }{\lor \;\; z}
    \addConstraint{\neg x }{\lor \;\; y }{\lor \neg z}
    \addConstraint{x }{\lor \neg y }{\lor \neg z}
    \addConstraint{x }{\lor \;\; y }{\lor \;\; z}.
\end{mini}
This corresponds to the MILP 
\begin{mini}
    {x,y,z \in \mathbb{B}}{z}{}{}
    \addConstraint{1-x}{ + \;\;\;\;\;\; y}{\;\;\;\;\;\;\;\;\; \;\;\; \geq 1 }
    \addConstraint{ x   }{ +1-y}{\;\;\;\;\;\;\;\;\; \;\;\; \geq 1 }
    \addConstraint{1-x}{ +1-y}{+ \;\;\;\;\;\; z \geq 1 }
    \addConstraint{1-x}{ + \;\;\;\;\;\; y   }{+ 1-z \geq 1 }
    \addConstraint{ x   }{ +1-y}{+ 1-z \geq 1 }
    \addConstraint{ x   }{ + \;\;\;\;\;\; y }{+ \;\;\;\;\;\; z \geq 1 }.
\end{mini}
The unique solution of this problem's \lpRelaxation{} is 
\begin{equation}
    x=0.5, \; y=0.5, \; z=0 .
\end{equation}
An inspection of the \lpRelaxation{} reveals that the variable $z$ takes the value zero -- contradicting our earlier observation that $z = 1$ is logically implied by the original CNF. 
If the implied information can be expressed as a clause, it follows from Section \ref{sec:Basics} that it can be translated into a \clauseCut{} and directly added to the MILP as a valid constraint.  
In the current example, the inequality $z \geq 1$ corresponds to the final missing facet of the integer hull.
Once incorporated, the \lpRelaxation{} becomes tight enough to yield the exact integer optimum, effectively solving the MILP through its relaxation.
This motivates a central question: \textit{How can we systematically derive implied clauses from the original CNF that strengthen the MILP by tightening its relaxation as much as possible?}

A practical approach, which, to the best of our knowledge, has not yet been used in the literature, is to examine the integral variables of the LP solution and to check whether their assignment violates the CNF. 
In other words, we test whether this partial assignment is already unsatisfiable. 
If so, it constitutes a \noGoodCut{} -- a clause that excludes an infeasible assignment of binary variables.
This test is implemented by passing the integral variables of the \lpRelaxation{}'s solution as assumptions to a SAT solver. 
If the solver returns \unsat{}, the assignment contradicts the CNF and thus implies a valid clause that rules out the current LP-solution. 
In our case, the LP solution assigns $z = 0$ as the only integral variable. 
Passing this as an assumption to a solver initialized for $\varphi$ yields:
\begin{equation*}
    solve(\neg z) = \unsat{} .
\end{equation*}
From this, we recover the implied clause $(z)$ and, correspondingly, the \clauseCut{} $z \geq 1$.

Importantly, adding this inequality completes the description of the integer feasible polytope. 
The updated \lpRelaxation{} now yields the integer-feasible solution $(x, y, z) = (1, 1, 1)$, solving the problem without further branching.

\subsection{The \baseMethod{}}
\label{sec:algorithms:base}
The first method directly implements the idea from Section \ref{sec:algorithms:example}.
For this method, we propose to use a SAT oracle to check whether the binary variables that are integral at the \lpRelaxation{}'s solution of the ILP form an unsatisfiable core, see Section \ref{sec:Basics} for definitions.
For this purpose, any \satSolver{} can be used.

If such an unsatisfiable core exists, it can be ruled out by adding a clause, thereby yielding a \clauseCut{}. 
This \clauseCut{} can often be strengthened by iteratively removing literals from the unsatisfiable core, yielding a minimal unsatisfiable core. 
Removing literals from the clause corresponds to eliminating non-negative terms from the left-hand side of the resulting cut, while leaving its right-hand side unchanged. 
As a consequence, the cut becomes stronger, being more restrictive with respect to the terms it still contains.
In Section \ref{sec:supportingHyperplanes} we showed that the \clauseCut{} derived from such a minimal core yields a supporting hyperplane of the integer-feasible polytope.

The \baseMethod{} (\baseMethodShort{}) described here is guaranteed to find a \clauseCut{} if and only if the partial assignment given by the integral variables of the LP solution is an unsatisfiable core.
Hence, if all valid \clauseCut{}s necessarily involve at least one currently non-integral variable, the method will not detect them.

\vspace{\baselineskip}
\noindent \textbf{Explanation of the \baseMethod{} (\baseMethodShort{}):}
The algorithm requires a \satSolver{} initialized with the CNF $\varphi$ and takes as input the fractional point $\Bar{x}$.
In line 1 we save the partial assignment given by the integral variables in $\Bar{x}$ as assumptions in the variable $A$.
In line 2, the solver is then queried with $solve(A)$ to check if these assumptions can be extended to a feasible solution.
If not, the solver returns \unsat{} and $A$ is an unsatisfiable core that represents the valid \noGoodCut{} $\sum_{i: \Bar{x}_i=0} x_i + \sum_{i: \Bar{x}_i=1} 1-x_i \geq 1$ which cuts off $\Bar{x}_i$.
If the solver returns SAT, no cut can be found by the \baseMethodShort{} and the algorithm returns $None$ in line 10.
In case a \clauseCut{} is found, it is consequently strengthened through a knock-out procedure to yield a supporting hyperplane from line 3 onward through making the unsatisfiable core into a minimal unsatisfiable core.
For the knock-out procedure we iterate over each literal in the core and check in line 4 whether leaving it out leaves the remaining core unsatisfiable. 
If this is the case, the literal is actually removed from the core and we proceed to the next candidate literal.
After strengthening, the cut is returned in line 8.

These cuts only operate on the set of literals given by the integral variables of the LP solution. 
This limits the set of possible cuts, as there are examples where the \lpRelaxation{} will not have any integral components (see Section \ref{sec:exampleNoIntegral}).
However, as we will see in Section \ref{sec:study}, the cuts can be very useful in practice.
The algorithm performs a single \satCall{} to check the integral values for feasibility, thereby finding some clause for a cut or verifying that it cannot find any. 
In practice there are many integral variables in the LP-solution, usually making this call quite fast, as many variables are fixed through the assumptions in the \satCall{}.
(Note that possible branched variables are a subset of those integral variables, and the cuts hence may prune the branch and bound tree.)
Additionally, it requires as many \satCall{}s for strengthening as the initial clause is long.
Though each of these \satCall{}s could take long due to the NP-complete nature of the problem, they are very fast in practice, as the \satSolver{} -- usually a CDCL solver -- warm starts by heavily reusing clauses it learned during the previous knock-outs. 

\vspace{\baselineskip}
\begin{algorithm}[H]
\label{alg:basic}
    \caption{\baseMethod{} (\baseMethodShort{})}
    \SetAlgoLined
    \LinesNumbered
    \DontPrintSemicolon
    \SetKwInOut{Input}{Input}
    \SetKwInOut{Output}{Output}
    \Input{vector $\Bar{x} \in \mathbb{R}^n$}
	\Output{\clauseCut{} cutting off $\Bar{x}$ or None}
    $A \gets \text{partial assignment}: \{x_i \mid \Bar{x}_i \in \mathbb{B}\} \rightarrow \mathbb{B}, \; x_i \mapsto \Bar{x}_i$ \;
	\eIf{$solve(A)=\unsat$}{  
         \For{$literal \in A$}{     
            \If{$solve(A \setminus \{lit\}) = \unsat$}{
                $A \gets A \setminus \{lit\}$ }}
        \Return $\sum_{l_i \in A}1-l_i \geq 1$}
	{
        \Return $None$    }
\end{algorithm}

\subsection{The \proofMethod{}}
\label{sec:algorithms:proof}
As discussed in Section \ref{sec:Basics}, every valid clause derivable from the CNF constitutes a valid \clauseCut{}.
Hence all clauses learned by a CDCL \satSolver{} (see Section \ref{sec:Basics}) yield cuts that can potentially be utilized within the MILP framework for cutting off fractional solutions.
Consequently, this algorithm assumes the use of a CDCL-based \satSolver{} (or any \satSolver{} capable of learning implied clauses), rather than relying merely on a generic \sat-oracle.
A notable advantage of this procedure is that it can separate every clause that has been derived by the \satSolver{} up to that point.

\vspace{\baselineskip}
\noindent \textbf{Explanation of the \proofMethod{} (\proofMethodShort{}):}
The algorithm requires a CDCL \satSolver{} initialized with the CNF $\varphi$ and takes as input the fractional point $\Bar{x}$.
This algorithm also uses an external data structure, \textit{knownClauses}, which is used as a database to save and look up clauses which are known to be valid for $\varphi$.
Similar to the \baseMethodShort{} in Section \ref{alg:basic}, the \proofMethodShort{} first builds in line 1 a partial assignment out of the integral components of the provided vector $\Bar{x}$. 
After that, in line 2, the solver is called, using the (possibly empty) partial assignment $A$ as assumptions.
Here, it is important to note that the \satSolver{} might learn clauses during any \satCall{}, even if the assumptions are satisfiable or if there are no assumptions at all.
Always calling the solver with assumptions dependent on $\Bar{x}$ also allows the solver to explore different parts of the space of possible clauses. 
From line 3 onward, the algorithms differ however: in case the solver returns \unsat{}, we add the corresponding clause to the database $knownClauses$ containing the known valid clauses. 
Thereby the algorithm is guaranteed to find a cut if the partial assignment is \unsat{}, similar to the \baseMethodShort{}.
Adding the clause ``manually'' is necessary, because the solver itself might not explicitly learn a clause that rules out the unsatisfiable assumptions, for example, if the assumptions lead to \unsat{} via unit propagation. 

In line 5 we call $getLearnedClauses()$ to read the new clauses the solver has learned up to this point -- if there are any --  and add them to the database \textit{knownClauses}.
Following this, in line 6, the method $checkKnownClausesForViolation$ with the argument $\Bar{x}$ searches the \textit{knownClauses} for any known clause whose \clauseCut{} is violated by $\Bar{x}$. 
There might be several such clauses.
However, as \clauseCut{}s usually must be strengthened to be useful, we do not try to strengthen and add all of them, but instead use a tie braking rule to select one with which we proceed.
In our experiments, we chose to use the first clause of shortest length among all violated clauses in $knownClauses$.
If a violated clause is found by $checkKnownClausesForViolation$, it is saved in the variable $C$, then strengthened in the same way as in the \baseMethodShort{}, and finally its corresponding \clauseCut{} is returned by the algorithm.
If no violated clause is found by $checkKnownClausesForViolation$, the algorithm returns $None$.
In terms of \satSolver{} usage, the \proofMethodShort{} is identical to the \baseMethodShort{}.
However, the \proofMethodShort{} has the additional step of searching through the known clauses, which allows it to possibly find cuts the \baseMethodShort{} can not. 
As the \proofMethodShort{} is also guaranteed to find a cut if the integral values of the supplied fractional point form an unsatisfiable core, the cuts it can separate form a superset of the \baseMethodShort{}'s cuts.

\vspace{\baselineskip}
\begin{algorithm}[H]
\label{alg:proof}
    \caption{\proofMethod{} (\proofMethodShort{})}
    \SetAlgoLined
    \LinesNumbered
    \DontPrintSemicolon
    \SetKwInOut{Input}{Input}
    \SetKwInOut{Output}{Output}
    \Input{vector $\Bar{x} \in \mathbb{R}^n$}
	\Output{\clauseCut{} cutting off $\Bar{x}$ or $None$}
	$A \gets \text{partial assignment}: \{x_i \mid \Bar{x}_i \in \mathbb{B}\} \rightarrow \mathbb{B}, \; x_i \mapsto \Bar{x}_i$ \;
	\If{$solve(A)=\unsat$}{
            add clause $\bigvee_{l_i \in A} \neg l_i $ to the $knownClauses$ 
    }
    $knownClauses \gets knownClauses \cup getLearnedClauses()$  \;
    $C \gets checkKnownClausesForViolation(\Bar{x})$   \;
    \eIf{$C \neq None $}{
            $core \gets \text{partial assignment}: \bigwedge_{l_i \in C} \neg l_i $ \;
            \For{$lit \in core$}{
                \If {$solve(core \setminus \{lit\})=\unsat$}{
                    $core \gets core \setminus \{lit\}$ }}
            \Return $\sum_{l_i \in core}1-l_i \geq 1$}
    {\Return $None$   }
\end{algorithm}

\subsection{Limitations of \clauseCut{} separation algorithms based on integral values}
\label{sec:exampleNoIntegral}
Both the algorithms introduced in this section use the integral values of a supplied point to build assumptions which are passed to a \satSolver{}. 
This means that supplied points without any integral components might cause problems.
While a CDCL solver in Algorithm \ref{alg:proof} might still learn clauses while verifying the feasibility of the CNF $\varphi$, it is not guaranteed to do so; the Algorithm \ref{alg:basic} however is guaranteed to not yield any cut in this case.
We want to mention an example where Algorithm \ref{alg:basic} will indeed be unable to find a cut:
\begin{maxi} 
    {x,y \in \mathbb{B}}{x}{}{}
    \addConstraint{\neg x }{\lor \neg y}
    \addConstraint{\neg x}{\lor \;\; y}.
\end{maxi}
The two clauses $\neg x \lor \neg y$ and $\neg x \lor y$ imply the clause $\neg x$, which is violated by the unique solution of the problem's \lpRelaxation{} $(x,y)=(\frac{1}{2},\frac{1}{2})$.
However, the problem is feasible but neither $x$ nor $y$ are integral; hence Algorithm \ref{alg:basic} will not be able to produce any \clauseCut{}.
Algorithm \ref{alg:proof} might find the clause $\neg x$, whose \clauseCut{} would separate the LP-solution, but is not guaranteed to do so: 
if, during the CDCL \satSolver{}'s depth-first-search, the solver first branches to $x=0$, it will immediately find a satisfying assignment and not learn any clauses.
In this case, no \clauseCut{} will be found.
If, on the other hand, the solver first branches to $x=1$, it will immediately detect the infeasibility through unit propagation and learn the clause $(\neg x)$, thus discovering the desired cut. 

\section{Computational Study}
\label{sec:study}
In this study, we compare both of our algorithms in terms of runtime against the general purpose \milpSolver{} \gurobi{} 12 \cite{gurobi_optimization_llc_gurobi_2024} (Section \ref{sec:study:Gurobi}) as well as the specialized \maxsatSolver{} ``\rcTwo{}" \cite{ignatiev_rc2_2019} (Section \ref{sec:study:RC2}) from the Python package \pysat{} \cite{ignatiev_rc2_2019}.
For these comparisons, Algorithm \ref{alg:solveInstance} was used.
There, either one of our algorithms Algorithm \ref{alg:basic} (\baseMethodShort{}, Section \ref{sec:algorithms:base}) or Algorithm \ref{alg:proof} (\proofMethodShort{}, Section \ref{sec:algorithms:proof}) is used in a cutting plane method to strengthen the \lpRelaxation{} of the problem before the strengthened formulation is handed to the \milpSolver{}.
First, the feasibility of the problem is verified with the initial \satCall{} in line 2.
For the cutting plane procedure, the \lpRelaxation{}s are solved with \gurobi{}, the fractional solution analyzed with the algorithms from Section \ref{sec:algorithms}, and if a cut is found, it is added to the problem.
Then, the strengthened \lpRelaxation{} is solved again and the procedure repeated until a stopping criterion is met or no more cuts are found.
In this paper, total runtime always refers to the total time needed from reading the problem to the finished optimization.

Moreover, an interesting question is why our cuts lead to such substantial speedups.
The study thus investigates in Section \ref{sec:study:speedup} the relationship between the average length (equating to the strength) of the \clauseCut{}s and the speedup observed during optimization.
Following this, we analyze the relationship between the time taken by the initial \satCall{}, which verifies the feasibility of the problem, and the average clause lengths of the \clauseCut{}s found in Section \ref{sec:study:initialVsClauseLength}.
In Section \ref{sec:study:initialVsTotal}, we also compare the total runtime of the optimization using our algorithms to the time needed for the initial \satCall{}.
The time needed for the initial call provides a lower bound on the possible runtime of any optimization algorithm directly employing the \satSolver{}. 

Section \ref{sec:study:nodes} finally analyzes the impact of \clauseCut{}s generated by the \baseMethodShort{} and \proofMethodShort{} methods on the branch-and-bound trees during the optimization of the strengthened MILPs.
For this, we compare the number of nodes explored by \gurobi{} alone against those explored during optimization using our methods. 
During this analysis, we also examined whether the integral values taken in each node's LP-solution resulted in infeasibility. \\

\noindent All experiments in this study were conducted on \cnfoptProblem{}s constructed from the SATLIB uf250 ~\cite{hoos_satlib_nodate} dataset of randomly generated satisfiable 3\satProblem{}s.
It consists of 100 satisfiable 3-CNF formulae, each with 250 variables and 1065 clauses.
To generate a \cnfopt{} instance, the SATLIB instance's CNF was taken and a randomly generated integer objective function added.
The weights were sampled from the range $[-10, 10]$. 
The instances were encoded in wcnf format used in the MAXSAT Evaluations 2023~\cite{maxsat_evaluations_2023_site}.

The \satSolver{} used was Glucose 4.2~\cite{audemard_glucose_2018} from the package \pysat{}~\cite{ignatiev_pysat_2018}.
We have chosen Glucose 4.2 because it was the fastest solver in the \pysat{} library that we could find for our purposes.
To access the learned clauses of the \satSolver{} in the \proofMethodShort{}, we use the proof tracking provided by the solver as a means of accessing the learned clauses.
The implementation of \rcTwo{} we use is the one provided in \pysat{}.
 
All computations were conducted on machines supplied by the NHR@FAU High Performance Computing (HPC) cluster, using compute nodes with two Intel Xeon Gold 6326 ("Ice Lake") processors (32 cores, 2.9 GHz), and 256 GB of main memory.
The operating system was AlmaLinux 8.10 (Cerulean Leopard, 64-bit) running Python 3.12.8. 
Optimization was performed with Gurobi Optimizer version 12.0.0 (build v12.0.0rc1) via the gurobipy (version 12.0.0) interface.
The \pysat{} version was 1.8.dev13.

To solve the instances with our algorithms, the following procedure was applied: \\
\begin{algorithm}[H]
    \label{alg:solveInstance}
    \caption{Solve Instance}
    \SetAlgoLined
    \LinesNumbered
    \DontPrintSemicolon
    \SetKwInOut{Input}{Input}
    \SetKwInOut{Output}{Output}
    \SetKwRepeat{Do}{do}{while}
    
    \Input{\cnfopt{} instance $p$ with CNF $\varphi$, linear objective function $c$, $separationAlgorithm$ is either ICCA or LCCA}
	\Output{optimal solution of $p$}    
    $\text{create \satSolver{} initialized with } \varphi$ \;
    $solve()$ \;
    $nCuts \gets 0$ \;
    $mip \gets \text{MILP model of } p$ with objective $c$ \;
    \Do {$cut \neq None \land nCuts < 150 \land elapsedTime <60\;seconds$}{ 
        optimize \lpRelaxation{} of $mip$ with \gurobi{} \;
        $\Bar{x} \gets $ optimal solution to \lpRelaxation{} of $mip$ \;
        $cut \gets separationAlgorithm(\Bar{x})$ \;
        \If {$cut \neq None$}{
            add $cut$ to $mip$ \;
            $nCuts \gets nCuts+1$}}
    solve $mip$ with \gurobi{} \;
    \Return optimal solution to $mip$
\end{algorithm}

\subsection{Runtime Comparison against \gurobi{} 12}
\label{sec:study:Gurobi}
In Figure \ref{fig:runtimesVsGrb} we compare the total runtimes of the general purpose \milpSolver{} \gurobi{} 12 and our methods.
Both of our methods outperform the general purpose \milpSolver{} \gurobi{} significantly, with a larger lead for the \proofMethodShort{}.
The graph shows each problem instance in the benchmark dataset as a data point in the plot.
A data point's $x$-axis value is given by its runtime in \gurobi{}, while its $y$-axis value is its runtime in our approach. 
To visualize the vastly different runtimes, the graph is logarithmic in both axes.
The graph contains two data sets: the runtimes for the \baseMethodShort{} as well as the runtimes of the \proofMethodShort{}. 
For reference, we added a line of equal runtimes. 
That means, if a point was on the line, its instance's runtime in \gurobi{} would be equal to the runtime in our method. 
Consequently, data points below the line are instances which have been solved faster by our methods than by \gurobi{}. 
Almost all instances are solved faster by the \baseMethodShort{} than by \gurobi{}, and the \proofMethodShort{} outperforms \gurobi{} in all instances. 
Both methods achieve very high speedups, which reach up to two orders of magnitude compared to \gurobi{}.
Employing the \proofMethodShort{} allows for solving the entire benchmark set in only 7.8\% of the time needed by \gurobi{} alone.
Finally, we note that while the problems seem to be almost uniformly difficult for \gurobi{}, two clusters of varying runtime seem to emerge for our algorithms, with the more difficult cluster being significantly smaller when using the \proofMethodShort{}.

\begin{figure}
    \includegraphics[width=0.6\textwidth]{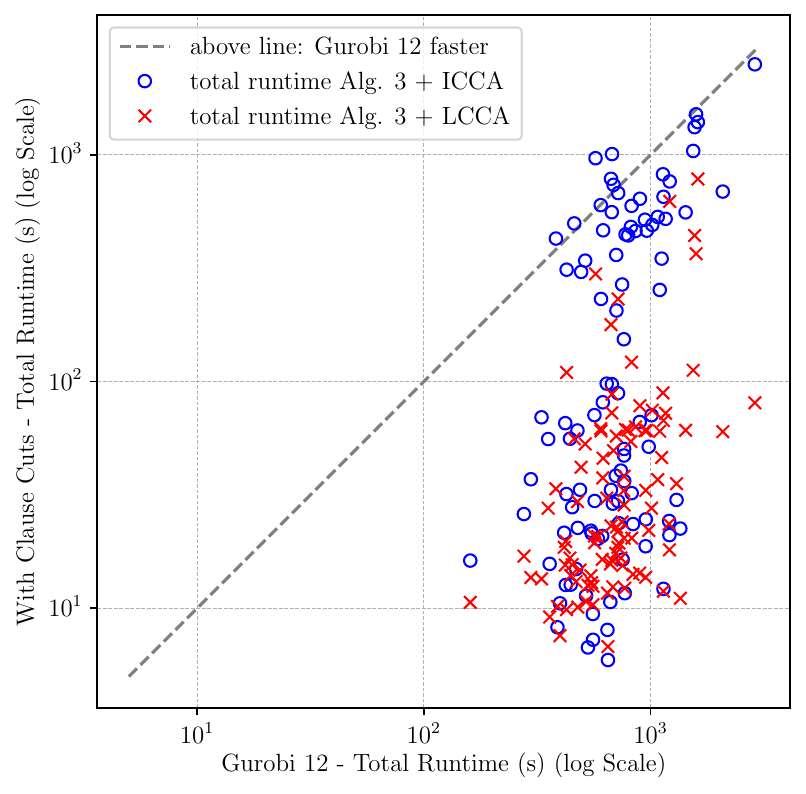}
    \centering
    \caption{Total runtimes of \gurobi{} 12 vs. Alg. \ref{alg:solveInstance} with \baseMethodShort{} and \proofMethodShort{}.}
    \label{fig:runtimesVsGrb}
\end{figure}

\subsection{Runtime comparison against the \maxsatSolver{} \rcTwo{}}
\label{sec:study:RC2}
For a comparison of our algorithms against a specialized \maxsatSolver{}, we chose \rcTwo{} included in the \pysat{}-package. 
In the MAXSAT Evaluations 2018 \cite{maxsat_2018_weighted} and 2019 \cite{maxsat_2019_weighted}, \rcTwo{} was ranked first in two complete categories, unweighted and weighted.
(In the complete category, the algorithms must produce a global optimum.)
The comparison against our algorithms is presented in Figure \ref{fig:runtimesVsRC2}.
For explanations on the make up of Figure \ref{fig:runtimesVsRC2} we refer to Section \ref{sec:study:Gurobi}, as it is identical to Figure \ref{fig:runtimesVsGrb}.

\rcTwo{}, which is a specialized solver for \maxsatProblem s, can solve the instances much faster than the general purpose \milpSolver{} \gurobi{}.
While the \baseMethodShort{} can be the fastest method on easy instances, it is falling behind on the harder instances comprised of the more difficult instance cluster and outperforms \rcTwo{} only in 35 out of 100 instances.
The \proofMethodShort{} however can solve 88 out of 100 instances faster than \rcTwo{} and solves the entire problem set in only 60.1\% of the time needed by \rcTwo{}.

\begin{figure}
    \includegraphics[width=0.6\textwidth]{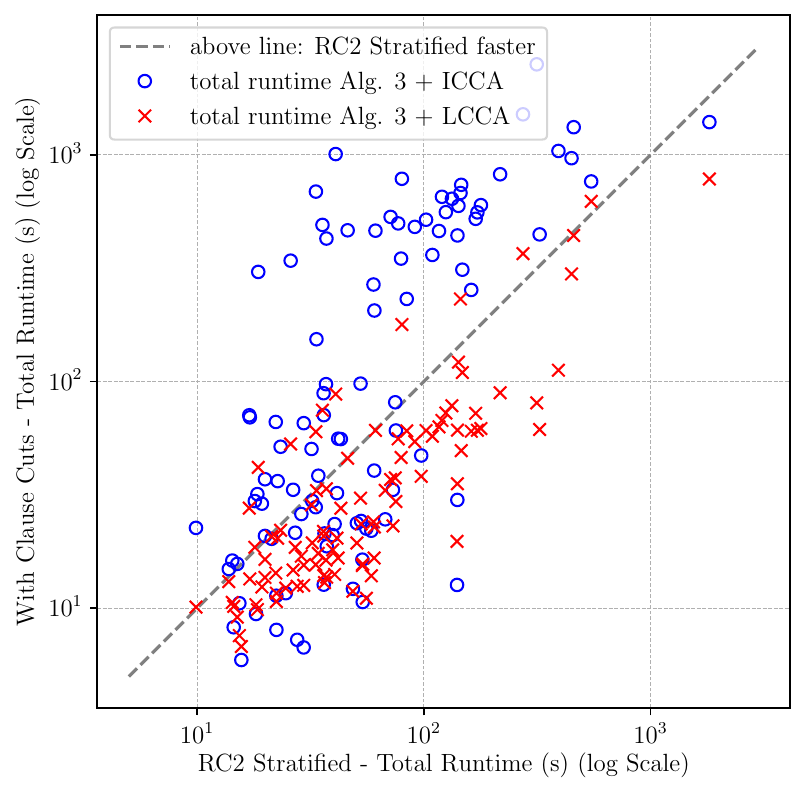}
    \centering
    \caption{Total runtimes of \rcTwo{} vs. Alg. \ref{alg:solveInstance} with \baseMethodShort{} and \proofMethodShort{}.}
    \label{fig:runtimesVsRC2}
\end{figure}

In contrast to the MILP based approaches, \rcTwo{} seemed to display sensitivity towards the magnitude of the objective functions during the computational study, getting slower as objective coefficients increased.
We provide cactus plots for both smaller and larger objective ranges in Section \ref{sec:study:Cactus}.

\subsection{Overall Comparison}
\label{sec:study:Cactus}
To compare all the different algorithms together and illustrate their performance over the whole problem set, we provide a cactus plot of the results in Figure \ref{fig:cactus10}. 
In their logarithmic $y$-axis, the graphs show the cumulative runtime of the $n$ fastest instances for each algorithm, with $n$ being the value on the $x$-axis.
As an example, choosing $n=50$ yields the times needed by the different solvers to solve their faster half of the benchmark set.
As the $n$ fastest instances are taken on a solver to solver basis, a cactus plot does not allow comparing two algorithms on the same instance.
For larger objective ranges, \rcTwo{} runs into time out at 7200 seconds. 
In the graphs, the $y$-axis value to represent time out is chosen to be $5 \cdot 10^5$ seconds.

\begin{figure}
    \centering
    \includegraphics[width=0.80\textwidth]{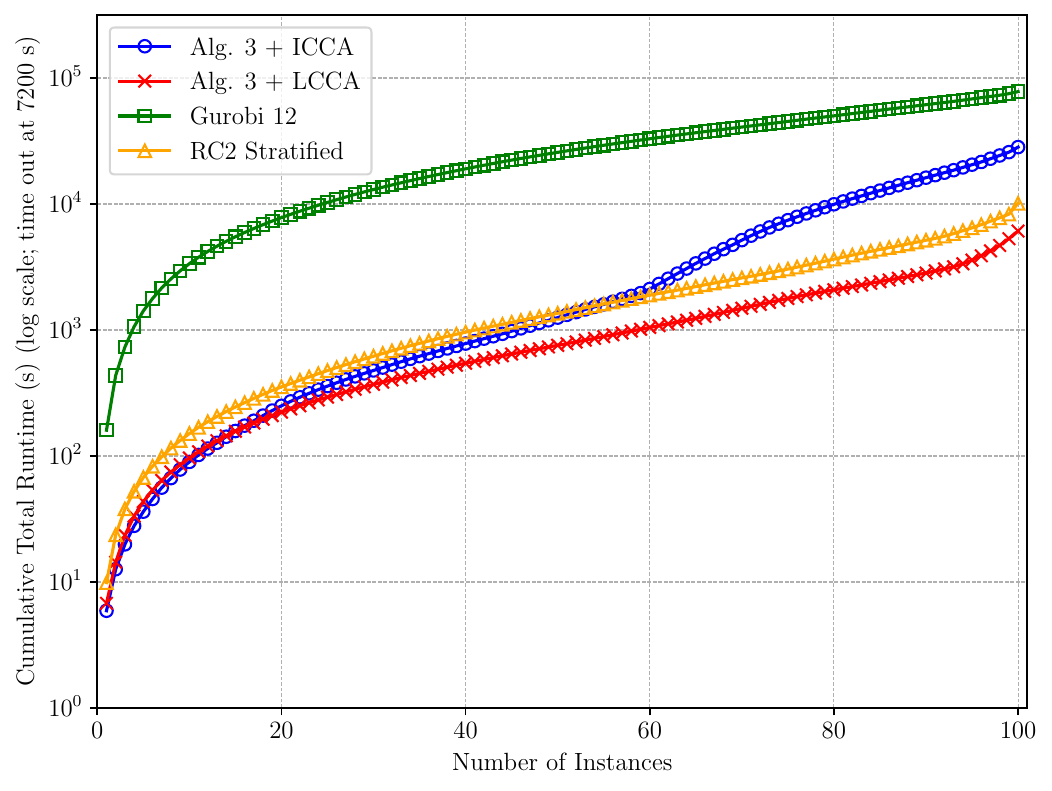}
    \caption{Cactus plot for objective range [-10,10] comparing Alg. \ref{alg:solveInstance} with \baseMethodShort{}, \proofMethodShort{}, \gurobi{} 12 and \rcTwo{}.}
    \label{fig:cactus10}
\end{figure}

To make our study robust against possible influences of different objective ranges, we also conducted a run over the full data set with objective coefficients in the range $[-1,1]$ (Figure \ref{fig:cactus1}), which is the best case for \rcTwo{} assuming its sensitivity towards larger objective coefficients.
Even in this case, the \proofMethodShort{} outperforms \rcTwo{}.
For integer objective coefficients sampled in the range of $[-25,25]$, \rcTwo{} is often running into time-out (see Figure \ref{fig:cactus25}).
The MILP based approaches \gurobi{} 12 only, \baseMethodShort{} and \proofMethodShort{} do not display sensitivity towards the magnitude of the objective coefficients.

\begin{figure}
    \centering
    \begin{minipage}{0.5\textwidth}
        \includegraphics[width=0.90\textwidth]{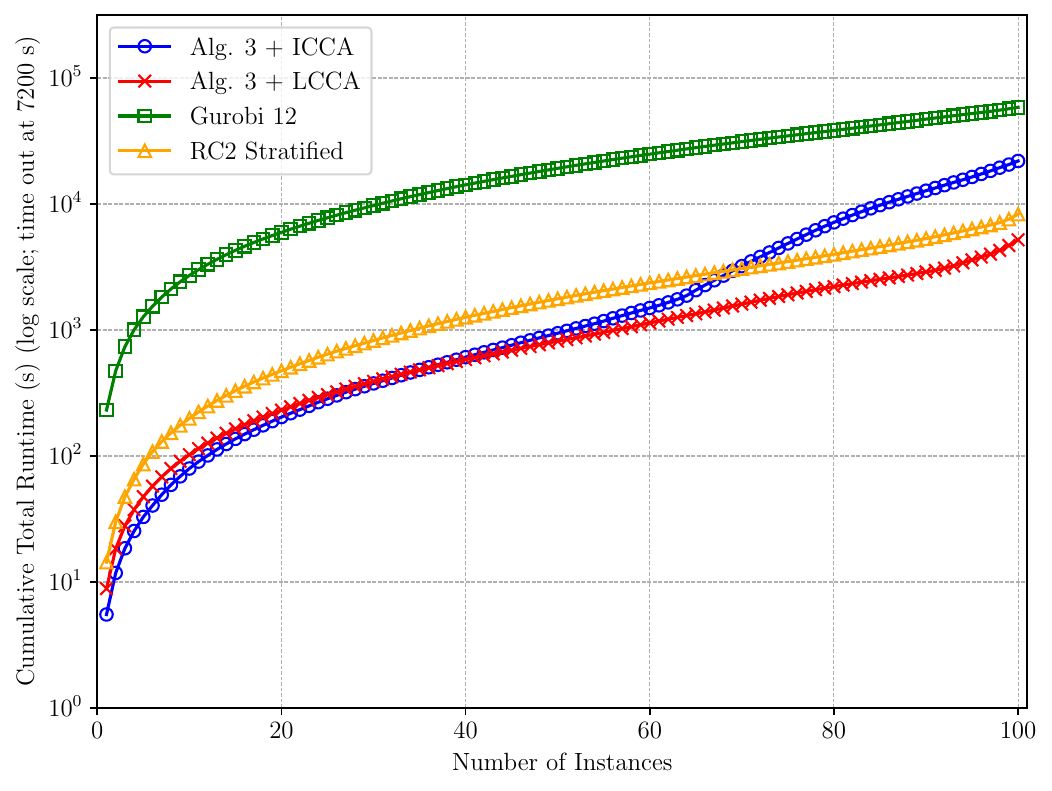}
        \caption{Cactus Plot for objective range $[-1, 1]$.}
        \label{fig:cactus1}
    \end{minipage}\hfill
    \begin{minipage}{0.5\textwidth}
        \includegraphics[width=0.90\textwidth]{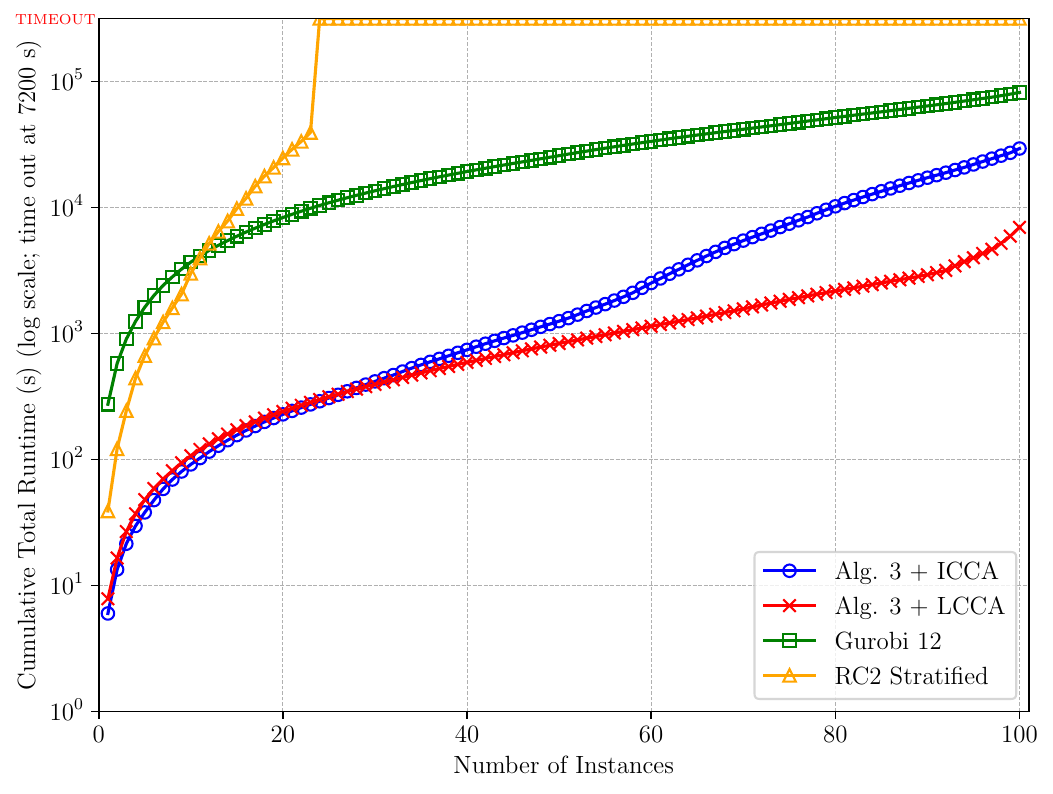}
        \caption{Cactus Plot for objective range $[-25, 25]$.} 
        \label{fig:cactus25}
    \end{minipage}
\end{figure}

\subsection{Connection between the Average Length of the \clauseCut{}s on the Total Speedup}
\label{sec:study:speedup}
An interesting question is where the speedups from our methods come from.
To inquire this, Figure \ref{fig:ClauseLengthVsSpeedup} shows the impact of the strength of the \clauseCut{}s, measured by the length of their clauses, on the speedup.
The study shows that the average clause length ($x$-axis) and thus the strength of the cuts has indeed a strong impact on the achieved speedup ($y$-axis, log scale), showing a nearly exponential dependence of the speedup on the average clause length.
This speedup becomes negligible for average clause lengths of 4 or greater, where they cannot achieve any significant speedup. 
Thus, for solving the benchmark instances, it seems essential to find strong \clauseCut{}s of length one (fixing a variable) or two (a conflict between two variables). 
Interestingly, many instances produced \clauseCut{}s with an average length of or very close to 1, indicating that many of these random problems have implicitly fixed variables.
Also, the strengthening step seems to be important to make the \clauseCut{}s useful.
We furthermore note that the \proofMethodShort{} yields significantly stronger cuts, as is reflected in its shorter average clause lengths. 
\begin{figure}
    \includegraphics[width=0.66\textwidth]{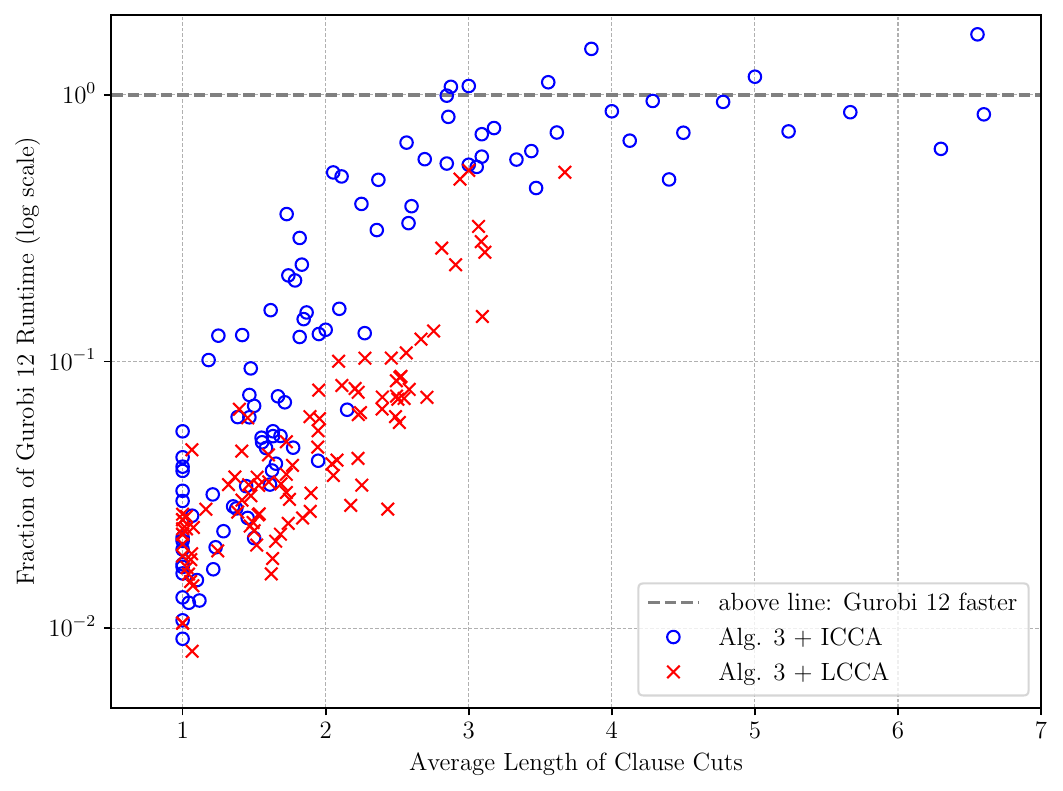}
    \centering
    \caption{Plot relating the average length of the \clauseCut{}s to the ratio $\frac{runtime_{\gurobi{}}}{runtime_{Alg.\ref{alg:solveInstance}}}$.
    }
    \label{fig:ClauseLengthVsSpeedup}
\end{figure}

\subsection{Relationship between the Time needed for the Initial \satCall{} and the Average Length of the \clauseCut{}s}
\label{sec:study:initialVsClauseLength}
The graph in Figure \ref{fig:TimeInitSatVsClauseLength} compares the time needed for the initial \satCall{} on the $x$-axis with the average length of the \clauseCut{}s generated by our procedure for the instance on the $y$-axis, both for the \baseMethodShort{} as well as the \proofMethodShort{}.
The empty upper right area in the graph suggests that solution time for the initial \satCall{} gives some implicit upper bound on the length of the clauses found.
Hence, the experiments suggest that the harder the initial \satCall{}, the shorter and thus the stronger the \clauseCut{}s found by our procedures are, and thus the faster the subsequent optimization is, as shown in Section \ref{sec:study:speedup}.
\begin{figure}
    \centering
    \includegraphics[width=0.66\textwidth]{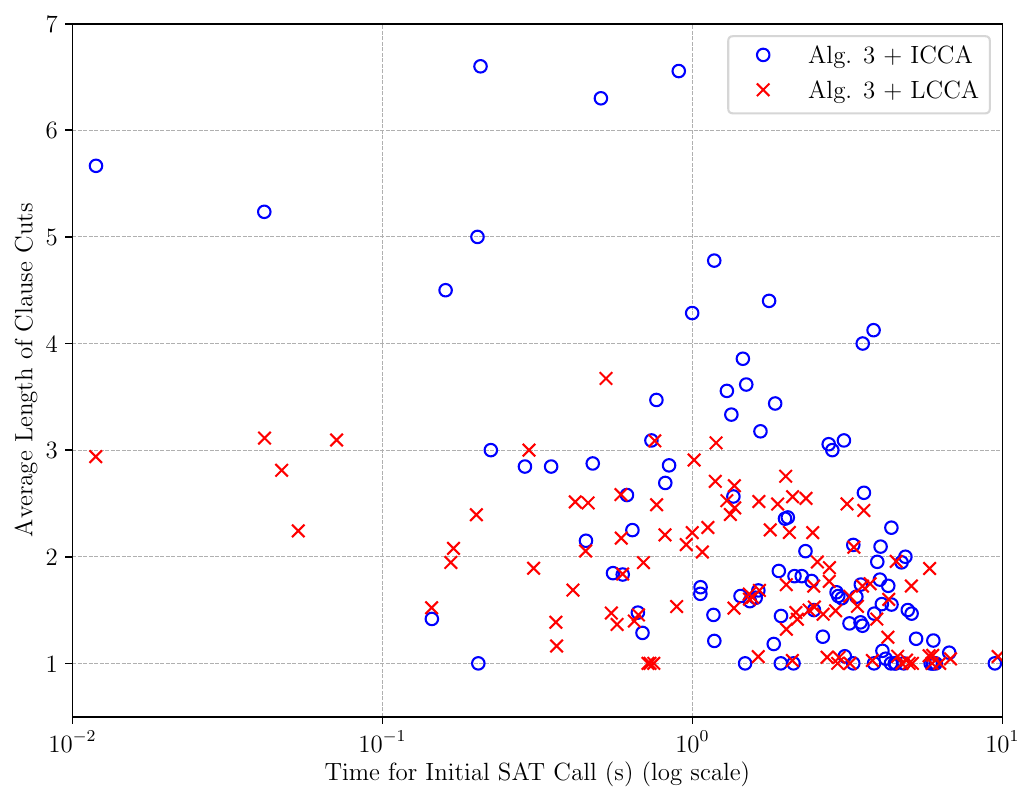}
    \caption{Connection between average clause length of \clauseCut{}s and time taken by the initial \sat{} call.}
    \label{fig:TimeInitSatVsClauseLength}
\end{figure}

\subsection{Influence of the Time needed for the Initial \satCall{} on the total Runtime}
\label{sec:study:initialVsTotal}
In some of the benchmark problems, the methods presented in this paper are able to make optimization over the CNF almost as fast as simply solving the associated \satProblem{}.
In these cases, the total runtime of the optimization is not significantly longer than the initial \sat-call which determines whether or not the problem is feasible at all.
We present the results in Figure \ref{fig:initialSatVsRuntime}.
Consistent with our findings from Section \ref{sec:study:initialVsClauseLength} (longer \satCall{}s lead to stronger \clauseCut{}s), the point cloud is sloped downwards.
This means that the longer it takes for the \satSolver{} to find a feasible assignment, the quicker the optimization can be done with our methods.
A theoretical limit of our methods would be reached if the whole optimization only takes as long as the initial \satCall{}, and indeed many instances come close to this limit. 
An instance on the limit would be on the dashed line.
\begin{figure}
    \includegraphics[width=0.66\textwidth]{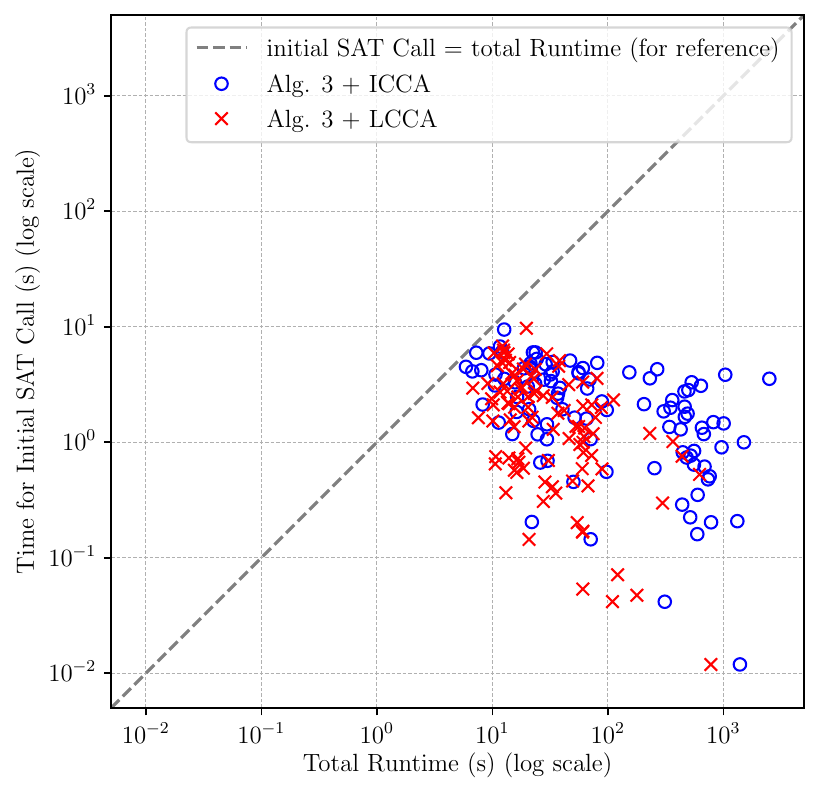}
    \centering
    \caption{Time taken by the initial \satCall{} vs. the total runtime of the optimization.}
    \label{fig:initialSatVsRuntime}
\end{figure}

\subsection{Node Feasibility in the MILP branch-and-bound tree and difficulty for \milpSolver{}s without \clauseCut{}s}
\label{sec:study:nodes}
To inquire about the impact of our cuts on the solution process, we analyzed the feasibility of nodes visited in the branch and bound tree. 
As \gurobi{} does not provide information to the user about the actual branching decisions, we examined the feasibility of a node by providing the integral values of variables in the node as assumptions to the \satSolver{} (akin to the assumptions in both the \baseMethodShort{} and \proofMethodShort{}).
The results are presented in Figure \ref{fig:nodeCount}.

Indeed, the performance of \gurobi{} is not optimal:
While usually visiting about $10^5$ and never less than $10^4$ nodes, the number of feasible nodes is low, finding only a single node with feasible integral values in 47\% of the instances.
This might contribute to the below-average performance of the general purpose \milpSolver{}.

On the other hand, our methods both substantially reduce the total number of nodes and increase the percentage of feasible nodes.
Applying our algorithms changes the MILP's behavior in most cases entirely, with the \proofMethodShort{} elevating the fraction of feasible nodes by 2 to 5 orders of magnitude and the \baseMethodShort{} achieving similar increases but mostly laying in between the results for \gurobi{} and the \proofMethodShort{}.
Moreover, the picture emerges that particularly hard instances are characterized by a high number of infeasible nodes even after the application of our algorithm.
Generally, it is remarkable that all but 13 of the 300 optimizations had less then 100 feasible nodes.
Hence it seems like reducing the number of infeasible nodes, i.e. identifying infeasibilities, is the key to quickly solve the benchmark instances.
\clauseCut{}s are particularly suited for this, as they can identify and eliminate all such infeasibilities. 

In many instances, adding our \clauseCut{}s eliminates all nodes but the root node for the problem.
This results in only a single node in the branch and bound tree of these instances.
This is the case in 1 instance for the \baseMethodShort{}, but 59 instances (equating to 59\% of the problem set) for the \proofMethodShort{}.
\gurobi{} alone does not achieve this in any instance.
In 20\% of the instances, the \proofMethodShort{} strengthens the relaxation so much that it already yields the integral solution to the problem. 
In the remaining cases, \gurobi{} either achieves an integral solution at the relaxation of the presolved model or adds further cuts itself through its own separation routines in addition to ours and thereby obtains an integral solution without branching.

\begin{figure}
    \includegraphics[width=0.66\textwidth]{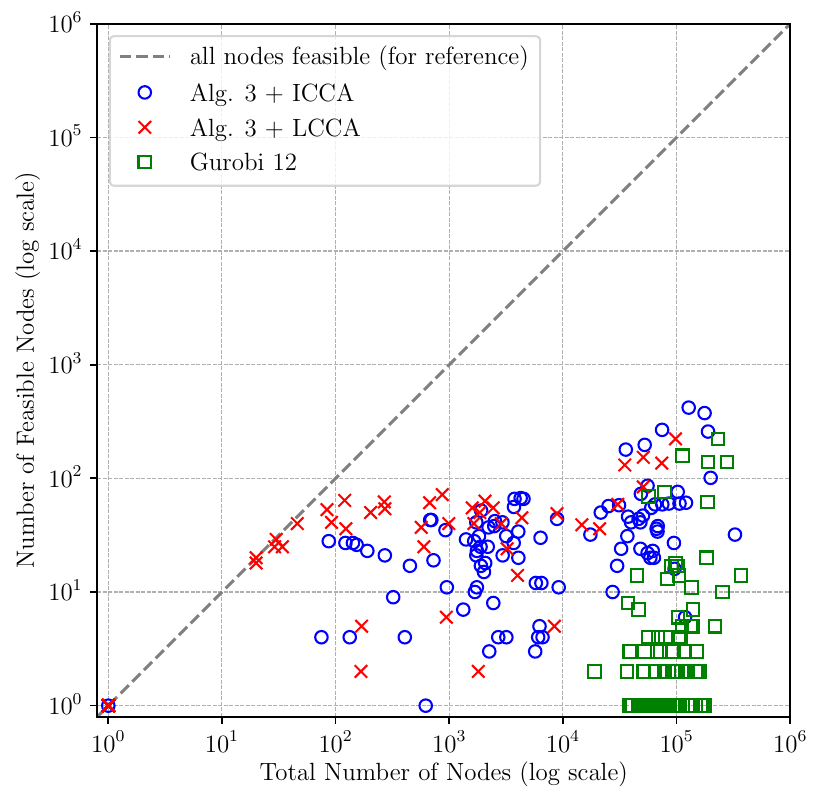}
    \centering
    \caption{The number of feasible nodes vs. the total number of nodes visited in the branch and bound optimization process.}
    \label{fig:nodeCount}
\end{figure}

\subsection{Summary of the computational study}
For the computational study, our separation algorithms \baseMethodShort{} and \proofMethodShort{} were used as a subroutine in Algorithm \ref{alg:solveInstance}.
In this context, both the \baseMethodShort{} and the \proofMethodShort{} significantly outperform the general purpose solver \gurobi{} 12 on the benchmark instances, by up to two orders of magnitude (Section \ref{sec:study:Gurobi}).
While \baseMethodShort{} is falling behind the specialized \maxsatSolver{} \rcTwo{} on harder instances, \proofMethodShort{} outperforms \rcTwo{}, requiring only about 60\% of the time needed by \rcTwo{} to solve the entire problem set (Section \ref{sec:study:RC2}).
Further, \rcTwo{} displayed sensitivity towards the magnitude of the objective coefficients, which is not the case for the MILP based approaches.
We provide cactus plots to compare all algorithms and illustrate the sensitivity of \rcTwo{} (Section \ref{sec:study:Cactus}).
The speedup of our methods against \gurobi{} is highly dependent on the strength of the \clauseCut{}s found, and high speedups were especially associated with clauses of length 1 and 2 (Section \ref{sec:study:speedup}).
In general, \cnfoptProblem{}s whose CNF required more time for the \satSolver{} to find any feasible solution led to stronger \clauseCut{}s (Section \ref{sec:study:initialVsClauseLength}).
Sometimes, the total time needed for our methods to solve the optimization problem was comparable to the time needed by the \satSolver{} to initially verify the feasibility of the problem, thus making optimization almost as fast as using the \satSolver{} itself in these cases (Section \ref{sec:study:initialVsTotal}).
An analysis of the feasibility of the nodes visited in the MILP's branch and bound trees (Section \ref{sec:study:nodes}) finally reveals that the problems' hardness seems to be caused by a very high number of infeasible nodes visited if \clauseCut{}s are not used. 
Conversely, \clauseCut{}s reduce the total number of nodes visited and elevate the share of feasible nodes by up to 5 orders of magnitude, very often leading to the problem being solved at the root node.

\section{Conclusion}
\label{sec:conclusion}
In summary, introducing \clauseCut{}s (strengthened \noGoodCut{}s specifically derived from CNF-based structures) into an \milp{}-framework yields substantial performance gains on randomly generated \maxsatProblem{}s compared to both the state-of-the-art general-purpose \milpSolver{} \gurobi{} and the specialized \maxsatSolver{} \rcTwo{}.
The two \clauseCut{}-separation algorithms proposed in this paper, the \baseMethod{} (\baseMethodShort{}) and the \proofMethod{} (\proofMethodShort{}), employ \satSolver{}s to derive and strengthen new clauses which are translated into cuts that cut off fractional points and can be seamlessly integrated into an \milp{}-framework. 
The \proofMethodShort{} in particular uses clauses learned by CDCL-\satSolver{}s, giving it both a theoretical and practical advantage to find more and stronger clauses than the \baseMethodShort{}, basically without any additional effort.

A computational study was conducted, where our algorithms were used in a cutting-plane procedure to strengthen the problems' \milp{}-formulation before they were given to the \milpSolver{}.
Using the \proofMethodShort{}, which was the best performing approach, total runtime (including the cutting-plane procedure) to solve the complete benchmark-set decreased to 7.8\% of the time taken by \gurobi{} and 60\% of the time taken by \rcTwo{}, with our strengthened relaxations producing integral optimal solutions in 20\% of the benchmark-instances and 59\% being solved at the root-node.

The analysis of the study suggests that the random benchmark-problems can be solved so efficiently by our methods, because the methods can find strong clauses implied by the problem's CNF.
Clauses of length 1 and 2 are having a specifically large impact and are crucial for high speedups.
Instances that take the \satSolver{} longer to solve tend to imply stronger clauses and can be optimized particularly fast.
In some cases, the initial \satSolver{} call accounts for nearly the entire runtime, making optimization effectively as fast as \sat{}-solving itself in these instances.

An inquiry into the feasibility of the nodes in the branch-and-bound tree revealed that without our cuts, almost all nodes visited by \gurobi{} have infeasible integral values at their LP-solution.
In fact, in 47\% of the instances \gurobi{} alone found only a single feasible node in the entire search-tree.
That means that in these instances, our cuts could have separated every but the final LP-solution in \gurobi{}'s branch-and-bound tree.
With our methods, whose cuts are capable of eliminating all such infeasibilities, the number of nodes visited is decreased by 2 to 5 orders of magnitude and the share of feasible nodes increased similarly as much.

We conclude that \clauseCut{}s are useful tools to improve the performance of \milp{}-based \maxsat{} optimizations and merit further research at the intersection of \milpLong{} and \maxsat{}.
Promising directions for future work include studying their impact on industrial instances with \cnfopt{} or \maxsat{} structure, as well as developing methods specialized in separating the particularly strong clauses of length one or two.

\section*{Acknowledgments}
\label{sec:acknowledgments}
The authors gratefully acknowledge the scientific support and HPC resources provided by the Erlangen National High Performance Computing Center (NHR@FAU) of the Friedrich-Alexander-Universität Erlangen-Nürnberg (FAU).
The hardware is funded by the German Research Foundation (DFG).

We also thank the authors of the \pysat{} package, whose toolkit was indispensable for our computational study.

\bibliographystyle{splncs04}
\bibliography{reference}

\end{document}